\theoremstyle{theorem}
\newtheorem{theorem}{Theorem}
\theoremstyle{definition}
\begin{document}

\title{A Prime-Representing Constant}
\markright{A Prime-Representing Constant}
\author{Dylan Fridman, Juli Garbulsky, Bruno Glecer,\\ James Grime, and Massi Tron Florentin}

\maketitle

\begin{abstract}
We present a constant and a recursive relation to define a sequence $f_n$ such that the floor of $f_n$ is the $n$th prime. Therefore, this constant generates the complete sequence of primes. We also show this constant is irrational and consider other sequences that can be generated using the same method.
\end{abstract}

In this note we present a constant and a recursive relation that generates the complete sequence of primes. 

\begin{theorem}\label{thm1} Let $p_n$ denote the $n$th prime. Then there exists a constant \[f_1 = 2.920050977316\ldots\] and a sequence
\[ f_{n} = \lfloor f_{n-1} \rfloor ( f_{n-1} - \lfloor f_{n-1} \rfloor + 1 ) \] 
such that the floor of $f_n$ is the $n$th prime, i.e., $\lfloor f_n \rfloor = p_n.$ \end{theorem}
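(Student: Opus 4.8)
The plan is to reverse-engineer the constant from the requirement itself. Write $f_n = p_n + a_n$, where $a_n$ is intended to be the fractional part $f_n - \lfloor f_n \rfloor$. Under the recursion, once $\lfloor f_n \rfloor = p_n$ we have $f_{n+1} = p_n(a_n + 1) = p_n + p_n a_n$, and for this to equal $p_{n+1} + a_{n+1}$ we must have $p_n a_n = (p_{n+1} - p_n) + a_{n+1}$. Solving this relation downward and dropping the vanishing remainder suggests the closed form
\[ a_n = \sum_{k=n}^{\infty} \frac{p_{k+1} - p_k}{p_n p_{n+1} \cdots p_k}, \]
so I would simply \emph{define} the constant by $f_1 = 2 + a_1$; the series has positive terms bounded (as shown below) by $1$, hence converges, and one checks numerically that $f_1 = 2.920050977\ldots$ as claimed.

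With this definition the first step is a bookkeeping identity. Factoring $p_n$ out of the series for $a_n$ and peeling off the $k = n$ term gives
\[ p_n a_n = (p_{n+1} - p_n) + \sum_{k=n+1}^{\infty} \frac{p_{k+1} - p_k}{p_{n+1} \cdots p_k} = (p_{n+1} - p_n) + a_{n+1}. \]
Hence, \emph{provided} $a_n \in [0,1)$ so that $\lfloor f_n \rfloor = p_n$ and $f_n - \lfloor f_n \rfloor = a_n$, the recursion yields $f_{n+1} = p_n(a_n + 1) = p_{n+1} + a_{n+1}$. An induction on $n$ then proves the theorem: the base case is $\lfloor f_1 \rfloor = \lfloor 2 + a_1 \rfloor = 2 = p_1$, and the inductive step is exactly the identity above combined with the bound $a_{n+1} \in [0,1)$.

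Everything therefore reduces to the key estimate $0 \le a_n < 1$ for every $n$, which I expect to be the main obstacle and the only place number theory enters. The lower bound is immediate since every summand is positive. For the upper bound I would invoke Bertrand's postulate in the form $p_{k+1} < 2p_k$, equivalently $p_{k+1} - p_k \le p_k - 1$, which makes each summand telescope:
\[ \frac{p_{k+1} - p_k}{p_n \cdots p_k} \le \frac{p_k - 1}{p_n \cdots p_k} = \frac{1}{p_n \cdots p_{k-1}} - \frac{1}{p_n \cdots p_k}. \]
Summing over $k \ge n$ collapses the right-hand side to $1$, giving $a_n \le 1$. Upgrading this to the strict inequality $a_n < 1$ needed for the floor is the delicate point, since the clean telescoping alone yields only $\le 1$. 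Here I would observe that equality would force $p_{k+1} - p_k = p_k - 1$, i.e.\ $p_{k+1} = 2p_k - 1$, \emph{simultaneously} for all $k \ge n$. This cannot happen: for any prime $p_k \equiv 2 \pmod 3$ we have $2p_k - 1 \equiv 0 \pmod 3$, so $2p_k - 1$ is composite and $p_{k+1} \le 2p_k - 2$; since infinitely many primes are $\equiv 2 \pmod 3$, some $k \ge n$ makes the corresponding summand strictly smaller, and therefore $a_n < 1$.
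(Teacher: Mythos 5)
Your proof is correct and follows essentially the same route as the paper: write the constant as an explicit series, check the recursion $f_{n+1}=p_n(f_n-p_n+1)$ as an algebraic identity between the series for consecutive $n$, and use Bertrand's postulate together with a telescoping comparison to trap $f_n$ in $(p_n,\,p_n+1)$. Your series for the fractional part, $a_n=\sum_{k\ge n}(p_{k+1}-p_k)/(p_n\cdots p_k)$, is a rearrangement (an Abel summation) of the paper's $f_n=(p_n-1)+\sum_{k>n}(p_k-1)/(p_n\cdots p_{k-1})$, so you are constructing the same constant. The one place where you genuinely improve on the paper is the strictness of the upper bound. The paper quotes Bertrand's postulate as $p_n<2p_{n-1}-1$, which actually fails (with equality) at $p_2=3=2p_1-1$ and $p_3=5=2p_2-1$, so its term-by-term strict inequality is not literally valid for the first terms; the summed inequality survives only because later terms are strictly smaller, a point the paper leaves implicit. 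You instead prove $a_n\le 1$ from the weak form $p_{k+1}<2p_k$ and then rule out equality by exhibiting, for every $n$, some $k\ge n$ with $p_{k+1}<2p_k-1$ via primes $\equiv 2\pmod 3$; that correctly isolates and repairs the delicate step. One microscopic caveat: for $p_k=2$ the number $2p_k-1=3$ is prime, so your mod-$3$ argument needs $p_k$ odd, which the infinitude of primes $\equiv 2\pmod 3$ of course supplies.
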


\begin{proof} Define a sequence $g_n$ as follows:
\[ g_n = \sum_{k=1}^n \frac{p_k - 1}{\prod_{i=1}^{k-1} p_i}. \]

Bertrand's postulate gives bounds on the size of each prime, namely $p_{n-1} < p_n < 2p_{n-1} - 1.$ We use this as we consider $g_n$ as $n$ tends to infinity:
\begin{eqnarray*}
\lim_{n \to \infty} g_n &=& (p_1 - 1) + \frac{p_2 - 1}{p_1} + \frac{p_3 - 1}{p_2p_1} + \frac{p_4 - 1}{p_3p_2p_1} +  \cdots \\
&<& (p_1 - 1) + \frac{2p_1 - 2}{p_1} + \frac{2p_2 - 2}{p_2p_1} + \frac{2p_3 - 2}{p_3p_2p_1} + \cdots.
\end{eqnarray*}

Most terms on the right-hand side of the inequality cancel out, leaving $(p_1 - 1) + 2 = 3.$ Since $g_n$ is strictly increasing and bounded, it is convergent. Its value is $2.920050977316\ldots.$ 

Define $f_1 = \lim_{n \to \infty} g_n$ and 
\begin{eqnarray*} f_{n} &=& (f_1 - g_{n-1}) \prod_{i=1}^{n-1} p_i\\  &=& (p_{n} - 1) + \frac{p_{n+1} -1}{p_{n}} + \frac{p_{n+2} -1}{p_{n+1}p_{n}} + \frac{p_{n+3} -1}{p_{n+2}p_{n+1}p_{n}} + \cdots. \end{eqnarray*}
It can be determined that $f_{n} = p_{n-1}(f_{n-1} - p_{n-1} + 1).$ 

Using Bertrand's postulate again, we have
\[ f_{n} > (p_{n} - 1) + \frac{p_{n} -1}{p_{n}} + \frac{p_{n+1} -1}{p_{n+1}p_{n}} + \frac{p_{n+2}-1}{p_{n+2}p_{n+1}p_{n}} + \cdots \]
and
\[ f_{n} < (p_{n} - 1) + \frac{2p_{n} -2}{p_{n}} + \frac{2p_{n+1} -2}{p_{n+1}p_{n}} + \frac{2p_{n+2} -2}{p_{n+2}p_{n+1}p_{n}} + \cdots. \]

\noindent
Most terms in the inequalities cancel out, leaving $p_n <  f_{n} < p_{n} + 1,$ and so the floor function $\lfloor f_{n} \rfloor = p_{n}.$ The sequence $f_n$ can now be generated recursively with the formula $f_{n} = \lfloor f_{n-1} \rfloor (f_{n-1} - \lfloor f_{n-1} \rfloor + 1).$
\end{proof}

There are many other examples of prime-producing formulae. One of the earliest and most famous examples of a prime-producing formula is the polynomial discovered by Euler \[n^2 + n + 41.\] This simple polynomial gives prime values for each integer $n$ from 0 to 39.

There are also many examples of prime-generating constants. For example, if $ \alpha = \sum_{i \geqslant 1} p_i/10^{2^{i+1}},$ then $\alpha$ is a prime-generating constant because we may generate the $n$th prime with the formula \[ p_n \lfloor 10^{2^{n+1}} \alpha \rfloor - 10^{2^{n}} \lfloor 10^{2^{n}} \alpha \rfloor. \] It is clear that $\alpha$ is irrational since it is given as a nonrepeating decimal. See \cite{CP} Theorem 1.2.2.

We took inspiration for Theorem \ref{thm1} from Mills's constant as described by William H. Mills in 1947 in \cite{M}. In this one-page note, Mills defines a constant $A$ and a function $\lfloor A^{3^n} \rfloor$ in such a way that the value of the function is prime for all natural numbers $n.$ 

Mills's constant is defined as the smallest positive real number that may be used for $A.$ Mills did not give any specific value for $A$; however, if the Riemann hypothesis is true, then we may define $A = 1.306377883863\ldots,$ which generates the following sequence: \[2, 11, 1361, 2521008887, 16022236204009818131831320183, \ldots.\] Each number in this sequence is prime. However, this is far from a complete sequence of primes. Indeed, the next value in this sequence is of the order $10^{84}.$ It is not yet known whether Mills's constant is irrational.

\begin{theorem} \label{thm2} The prime-generating constant $f_1$ is irrational. \end{theorem}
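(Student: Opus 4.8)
The plan is to argue by contradiction: I would suppose $f_1 = a/b$ for positive integers $a$ and $b$, and then use the product formula for $f_n$ from the proof of Theorem~\ref{thm1} to pin the fractional parts $f_n - p_n$ uniformly away from $0$. This lower bound will clash with the fact that the primes cannot grow too fast, giving the contradiction.

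The first step is to observe that $b f_n$ is an integer for every $n$. Writing $P_{n-1} = \prod_{i=1}^{n-1} p_i$, the identity $f_n = (f_1 - g_{n-1})P_{n-1}$ gives $b f_n = a P_{n-1} - b\, g_{n-1} P_{n-1}$, and $g_{n-1}P_{n-1} = \sum_{k=1}^{n-1}(p_k - 1)\prod_{i=k}^{n-1} p_i$ is a sum of integers. Since $p_n < f_n < p_n + 1$ by Theorem~\ref{thm1}, the quantity $b(f_n - p_n)$ is then an integer strictly between $0$ and $b$; hence $f_n - p_n \ge 1/b$ for every $n$. (In particular $b \ge 2$, consistent with the fact that $f_1$ is not an integer.)

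Next I would convert this into a statement about prime gaps. Rearranging the recursion as $f_n = p_n - 1 + f_{n+1}/p_n$ and using $f_{n+1} < p_{n+1} + 1$ yields $f_n - p_n = f_{n+1}/p_n - 1 < (p_{n+1} - p_n + 1)/p_n$. Combining this with the bound $f_n - p_n \ge 1/b$ forces $p_{n+1} > (1 + 1/b)p_n - 1$ for all $n$, so that once $p_n \ge 2b$ we would have $p_{n+1} \ge (1 + \tfrac{1}{2b})p_n$; that is, the primes would grow at least geometrically.

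The crux, and the step that genuinely needs input beyond what powered Theorem~\ref{thm1}, is to rule out this geometric growth. Bertrand's postulate only bounds $p_{n+1}$ from \emph{above} by $2p_n$ and is therefore useless against a geometric \emph{lower} bound; instead I would invoke a density fact about the primes. The cleanest is Euler's theorem that $\sum_n 1/p_n$ diverges: geometric growth $p_n \gg \rho^n$ with $\rho = 1 + \tfrac{1}{2b} > 1$ makes $\sum_n 1/p_n$ converge, a contradiction. (Alternatively one could cite the prime number theorem $p_n \sim n \ln n$, which is incompatible with exponential growth.) I expect this appeal to the distribution of primes to be the main obstacle to keeping the argument fully self-contained, precisely because the elementary tool used for Theorem~\ref{thm1} does not suffice here.
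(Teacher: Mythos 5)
Your proposal is correct, and its first half coincides with the paper's argument: both deduce from $f_1=a/b$ that $bf_n$ is an integer for all $n$ (you via the closed form $f_n=(f_1-g_{n-1})\prod_{i<n}p_i$, the paper via the recursion --- either works), and hence that the fractional part $r_n=f_n-p_n$ satisfies $r_n\ge 1/b$. Where you diverge is in how the contradiction is extracted. The paper rearranges the recursion to $r_n+1=(p_{n+1}+r_{n+1})/p_n$ and invokes the prime number theorem in the form $p_{n+1}/p_n\to 1$ to conclude $r_n\to 0$ directly. You instead turn the bound $r_n\ge 1/b$ into the gap inequality $p_{n+1}>(1+1/b)p_n-1$, deduce geometric growth $p_{n+1}\ge(1+\tfrac{1}{2b})p_n$ once $p_n\ge 2b$, and refute that with Euler's theorem that $\sum 1/p_n$ diverges. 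Your route buys something real: Euler's divergence is a strictly weaker and far more elementary input than the prime number theorem (it was available a century earlier and has a short self-contained proof), so your version of Theorem~\ref{thm2} rests on less machinery. The trade-off is that the paper's version generalizes verbatim to any sequence with ratio of consecutive terms tending to $1$ (as the authors note afterwards, e.g., to reprove the irrationality of $e$), whereas your argument as stated leans on a density fact specific to the primes; for the general statement you would replace Euler's theorem with whatever rules out geometric growth for the sequence at hand.
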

\begin{proof} 
Since $p_n < f_n < p_n + 1$ for all $n,$ we may write $f_n = p_n + r_n,$ where $0 < r_n < 1.$ 

Assume $f_1$ is rational, so that $f_1 = a/b.$ Using the recurrance relation 
\[ f_{n+1} = p_n(f_n - p_n + 1) = p_n(r_n+1), \] 
we see that $bf_n$ is an integer for all $n.$ In particular, $r_n \geqslant 1/b$ for all $n.$

However, we can rearrange the above expression as follows:
\[ (r_n + 1) = \frac{f_{n+1}}{p_n} = \frac{p_{n+1} + r_{n+1}}{p_n}. \]
By the prime number theorem we know that $p_{n+1}/p_n$ tends to 1 as $n$ tends to infinity. This means that, since the $r_n$ are bounded, the right-hand side also tends to 1. And so $\lim_{n \to \infty} r_n = 0.$ This contradicts $r_n \geqslant 1/b$ for all $n.$
\end{proof}

The prime-generating constant of Theorem \ref{thm1} and Theorem \ref{thm2} has appeared before, albeit in a different context, in \cite{BM}, \cite{S}, and \cite{R}. In these papers the constant is described as the average of the sequence $2, 3, 2, 3, 2, 5, 2, 3, \ldots,$ the sequence of smallest primes that do not divide $n.$

To prove that the average of this sequence is the same as our constant, let us consider the probability that a prime $p_k$ is the smallest prime that does not divide $n,$ for some natural number $n.$ This probability can be written as 
\[ P(p_k \textrm{ does not divide } n \textrm{ and } p_1, p_2, \ldots, p_{k-1} \textrm{ divide } n) = \left( 1 - \frac{1}{p_k} \right) \prod_{i=1}^{k-1} \frac{1}{p_i}.  \]
The average of the sequence is therefore given by \[ \sum_{k=1}^{\infty} P(\textrm{The smallest prime that does not divide } n \textrm{ is } p_k) \cdot p_k = \sum_{k=1}^\infty \frac{p_k -1}{\prod_{i=1}^{k-1} p_i},\] which is the definition of our prime-generating constant in Theorem \ref{thm1}.

The arguments in this note are not limited to the sequence of primes. Indeed, they can be applied to any sequence that follows Bertrand's postulate, and any such sequence will define its own constant $f_1.$ This constant can then be used in the same formula for $f_n$ described in Theorem \ref{thm1}. 

Similarly, if a sequence follows Bertrand's Postulate and the ratio of consecutive terms tend to 1, then the sequence-generating constant is irrational by the same argument used in the proof of Theorem \ref{thm2}.

If we just consider the lower bound of Bertrand's postulate, the most compact sequence we get is $2, 3, 4, 5, 6, 7, \ldots.$ In this case the sequence defines the Euler constant, that is to say $f_1 = e.$ Since the ratio of consecutive integers tends to 1 this gives us another proof that $e$ is irrational.

At the other extreme, if we just  consider the upper bound of Bertrand's postulate, the most compact sequence we get is $3, 4, 6, 10, 18, 34, \ldots.$ Here the $n$th term is $2^{n-1} + 2$ and the sequence defines the constant $3.56797609098\ldots.$ Interestingly, we have not found this constant mentioned anywhere in the literature before. 

\begin{acknowledgment}{Acknowledgments.}
Dylan, Juli, Bruno and Massi are a group of 18/19-year-old friends from Buenos Aires, Argentina. The original idea came to Juli while having a shower. Bruno calculated the prime-generating constant, first by brute-force and then by finding its formula. As the investigation continued, Juli and Bruno were joined by Massi and Dylan. Later, the team contacted mathematician James Grime who helped by tidying up some of the proofs and writing this note.  So $\infty$ thanks to James!
\end{acknowledgment}

\begin{biog}
\item[Dylan Flidman]
\begin{affil}
Mathematics Department (FCEN), University of Buenos Aires, Acceso Pabell$\acute{o}$n 1, Buenos Aires, Argentina \\
dylanfridman@gmail.com
\end{affil}
\item[Juli Garbulsky]
\begin{affil}
Mathematics Department (FCEN), University of Buenos Aires, Acceso Pabell$\acute{o}$n 1, Buenos Aires, Argentina  \\
juliangarbulsky@gmail.com
\end{affil}
\item[Bruno Glecer]
\begin{affil}
National Technological University - Buenos Aires Regional Faculty - Department of Electronic Engineering, Argentina\\
bruno.glecer@gmail.com
\end{affil}
\item[James Grime]
\begin{affil}
Cambridge, UK \\
email@jamesgrime.com
\end{affil}
\item[Massi Tron Florentin]
\begin{affil}
Physics Department (FCEN), University of Buenos Aires, Acceso Pabell$\acute{o}$n 1, Buenos Aires, Argentina  \\
tronmassimiliano@gmail.com
\end{affil}
\end{biog}
\vfill\eject


\begin{thebibliography}{1}
\bibitem{BM} Bou-Rabee, K. McReynolds, D. B. (2010). Bertrand’s postulate and subgroup growth. \textit{J. Algebra}. 324(4): 793--819.
\bibitem{CP} Crandall, R. Pomerance, C. (2005). \textit{Prime Numbers: A Computional Perspective.} New York: Springer-Verlag. doi.org/10.1007/0-387-28979-8
\bibitem{M} Mills, W. H. (1947). A prime-representing function. \textit{Bull. Amer. Math. Soc}. 53(6): 604.
\bibitem{S} OEIS Foundation, Inc (2000). On-Line Encyclopedia of Integer Sequences. oeis.org/A053669
\bibitem{R} Rivin, I. (2012). Geodesics with one self-intersection, and other stories. \textit{Adv. Math}. 231(5): 2391--2412.
\end{thebibliography}
\end{document}